\documentclass[12pt]{amsart}
\usepackage{amscd,amssymb,amsthm,amsmath,amssymb,textcomp,supertabular,longtable,enumerate,rotating,mathrsfs,mathtools,wasysym}
\usepackage[matrix,arrow,curve]{xy}
\usepackage{pdflscape}

\sloppy\pagestyle{plain}

\textwidth=16cm \textheight=23cm

\addtolength{\topmargin}{-40pt} \addtolength{\oddsidemargin}{-2cm}
\addtolength{\evensidemargin}{-2cm}

\newtheorem*{theorem}{Theorem}
\newtheorem*{claim}{Claim}
\newtheorem*{proposition}{Proposition}
\newtheorem*{lemma}{Lemma}
\newtheorem*{corollary}{Corollary}
\theoremstyle{definition}
\newtheorem*{example}{Example}
\theoremstyle{remark}
\newtheorem*{remark}{Remark}

\newcommand{\mumu}{\boldsymbol{\mu}}

\usepackage{xcolor}

\newcommand{\blue}[1]{\leavevmode{\color{blue}{#1}}}

\makeatletter
\newcommand{\pushright}[1]{\ifmeasuring@#1\else\omit\hfill$\displaystyle#1$\fi\ignorespaces}
\newcommand{\pushleft}[1]{\ifmeasuring@#1\else\omit$\displaystyle#1$\hfill\fi\ignorespaces}
\makeatother

\author{Ivan Cheltsov and Jihun Park}

\title{K-stable Fano threefolds of rank $2$ and degree $30$}

\pagestyle{headings}

\address{ \emph{Ivan Cheltsov}\newline \textnormal{School of Mathematics, University of Edinburgh, Edinburgh, Scotland
\newline
\texttt{I.Cheltsov@ed.ac.uk}}}

\address{ \emph{Jihun Park}\newline \textnormal{Center for Geometry and Physics, Institute for Basic Science, Pohang, Korea \newline
Department of Mathematics, POSTECH, Pohang, Korea \newline
\texttt{wlog@postech.ac.kr}}}

\begin{document}

\begin{abstract}
We find all K-stable smooth Fano threefolds in the family \textnumero 2.22.
\end{abstract}

\maketitle

Let $X$ be a~smooth Fano threefold. Then $X$ belongs to one of the $105$ families,
which are labeled as \textnumero 1.1, \textnumero 1.2, $\ldots$, \textnumero 9.1, \textnumero 10.1.
See \cite{ACCFKMGSSV}, for the description of these families.
If $X$ is a~general member of the~family \textnumero $\mathscr{N}$, then \cite[Main~Theorem]{ACCFKMGSSV} gives
$$
X\ \text{is K-polystable} \iff\mathscr{N}\not\in\left\{\aligned
&\ 2.23, 2.26, 2.28, 2.30, 2.31, 2.33, 2.35, 2.36, \\
& \ 3.14, 3.16, 3.18,3.21, 3.22, 3.23, \\
&\ 3.24, 3.26, 3.28, 3.29,  3.30, 3.31, \\
&\    4.5, 4.8, 4.9, 4.10, 4.11, 4.12,\\
&\  5.2
\endaligned
\right\}.
$$
The goal of this note is to find all K-polystable smooth Fano threefolds in the family \textnumero 2.22.
This family contains both K-polystable and non-K-polystable smooth Fano threefolds,
and a~conjectural characterization of all K-polystable members has been given in \cite[\S~7.4]{ACCFKMGSSV}.
We will confirm this conjecture --- this will complete the~description of all K-polystable smooth Fano threefolds of Picard rank $2$ and degree $30$ started in \cite{ACCFKMGSSV}.

Starting from now, we suppose that $X$ is a~smooth Fano threefold in the~family  \textnumero 2.22.
Then $X$ can be described both as the~blow up of $\mathbb{P}^3$ along a~smooth twisted quartic curve, and
the blow up of $V_5$, the~unique smooth threefold \textnumero 1.15, along an~irreducible conic.
More precisely, there are a~smooth twisted quartic curve $C_4\subset\mathbb{P}^3$, a~smooth conic $C\subset V_5$,
and a~commutative diagram
$$
\xymatrix{
&X\ar@{->}[ld]_{\pi}\ar@{->}[rd]^{\phi}&\\%
\mathbb{P}^3\ar@{-->}[rr]^{\psi}&&V_5,}
$$
where $\pi$ is the~blow up of $\mathbb{P}^3$ along $C_4$,
$\phi$ is~the~blow up of  $V_5$ along $C$,
and $\psi$ is given by the~linear system of cubic surfaces  containing~$C_4$. Here, $V_5$ is  embedded in $\mathbb{P}^6$ as described in~\mbox{\cite[\S~5.10]{ACCFKMGSSV}}.
All smooth Fano threefolds in the~family  \textnumero 2.22 can be obtained in this way.

The~curve $C_4$ is contained in~a~unique smooth quadric surface $Q\subset\mathbb{P}^3$, and $\phi$ contracts the~proper transform of this surface.
Note that 
$$
\mathrm{Aut}(X)\cong\mathrm{Aut}(\mathbb{P}^3,C_4)\cong\mathrm{Aut}(Q,C_4).
$$
Choosing appropriate coordinates on $\mathbb{P}^3$, we may assume that $Q$ is given by $x_0x_3=x_1x_2$, where $[x_0: x_1: x_2: x_3]$ are  coordinates on $\mathbb{P}^3$.
Fix the~isomorphism $Q\cong\mathbb{P}^1\times\mathbb{P}^1$ given by
$$
\big([u:v],[x:y]\big)\mapsto\big[xu:xv:yu:yv\big],
$$
where $([u:v],[x:y])$ are coordinates in $\mathbb{P}^1\times\mathbb{P}^1$.
Swapping $[u:v]$ and $[x:y]$ if necessary, we may assume that $C_4$ is a~divisor of degree $(1,3)$ in $Q$,
so that $C_4$ is given in $Q$ by
$$
uf_3(x,y)=vg_3(x,y)
$$
for some non-zero cubic homogeneous polynomials $f_3(x,y)$ and $g_3(x,y)$.

Let $\sigma\colon C_4\to\mathbb{P}^1$ be the map given by the projection $([u:v],[x:y])\mapsto [u:v]$.
Then $\sigma$ is a~triple cover, which is ramified over at least two points.
After an~appropriate change of coordinates~$[u:v]$, we may assume that $\sigma$ is ramified over $[1:0]$ and $[0:1]$.
Then both $f_3$ and $g_3$ have multiple zeros in $\mathbb{P}^1$.
Changing coordinates $[x:y]$, we may assume that these zeros are $[0:1]$ and $[1:0]$, respectively.
Keeping in mind that the~curve $C_4$ is smooth, we see that $C_4$ is given by
$$
u(x^3+ax^2y)=v(y^3+by^2x)
$$
for some complex numbers $a$ and $b$, after a~suitable scaling of the~coordinates.
If $a=b=0$, then the~curve $C_4$ is given by $ux^3=vy^3$,
which gives $\mathrm{Aut}(X)\cong\mathrm{Aut}(Q,C_4)\cong\mathbb{G}_m\rtimes\mumu_2$.
In this case, the threefold $X$ is known to be K-polystable \cite[\S~4.4]{ACCFKMGSSV}.

\begin{example}
Suppose that $ab=0$, but $a\ne 0$ or $b\ne 0$.
We can scale the~coordinates further and swap
them if necessary, and assume that the~curve $C_4$ is given by 
$$
ux^3=v(y^3+y^2x).
$$
In this case, the threefold $X$ is not K-polystable \cite[\S~7.4]{ACCFKMGSSV}.
\end{example}

A conjecture in \cite[\S~7.4]{ACCFKMGSSV} says that the non-K-polystable Fano threefold described in this example is
the~unique non-K-polystable smooth Fano threefold in the~family~\textnumero 2.22.
Let us show that this is indeed the case.
To do this, we may assume that $a\ne 0$ and $b\ne 0$. Then, scaling the~coordinates, we~may assume that $C_4$ is given by
\begin{equation}
\label{equation:curve}\tag{$\bigstar$}
u\big(x^3+\lambda x^2y\big)=v\big(y^3+\lambda y^2x\big)
\end{equation}
for some non-zero complex number $\lambda$.
Since the curve $C_4$ is smooth, we must have $\lambda\ne\pm 1$.
Moreover, if $\lambda=\pm 3$, then we can change the~coordinates on $Q$ in such a way  that $C_4$ would be given by $ux^3=v(y^3+y^2x)$,
so that $X$ is not K-polystable in this case.

We know from \cite{ACCFKMGSSV} that $X$ is K-stable if $C_4$ is given by \eqref{equation:curve} with $\lambda$ general.
In particular, we know from \cite[\S~4.4]{ACCFKMGSSV} that the~threefold $X$ is K-stable when $\lambda=\pm\sqrt{3}$.
Our main result is the following theorem.

\begin{theorem}
Suppose that $C_4$ is given in \eqref{equation:curve} with $\lambda\not\in\{0,\pm 1,\pm 3\}$. Then $X$ is K-stable.
\end{theorem}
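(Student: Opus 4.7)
The plan is to prove $\delta(X)>1$, which implies K-stability since $\mathrm{Aut}(X)$ is finite for $\lambda\notin\{0,\pm 1,\pm 3\}$. I would organize the argument around an equivariant reduction followed by an Abban--Zhuang flag estimate, in the spirit of \cite{ACCFKMGSSV}.

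\smallskip

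\noindent\textbf{Equivariant reduction.} The involution $\iota\colon\big([u:v],[x:y]\big)\mapsto\big([v:u],[y:x]\big)$ preserves~\eqref{equation:curve} and so lifts to an involution of $X$; put $G=\langle\iota\rangle\cong\mathbb{Z}/2$. By Zhuang's equivariant K-stability theorem it suffices to verify $A_X(E)>S_X(E)$ for every $G$-invariant prime divisor $E$ over $X$.

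\smallskip

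\noindent\textbf{Case analysis by the center.} Let $Z=c_X(E)$; this is a $G$-invariant subvariety. When $\dim Z=2$, the candidates are the finitely many $G$-invariant prime divisors on $X$ itself: the $\pi$-exceptional divisor $E_\pi$, the $\phi$-exceptional divisor $E_\phi$, the proper transform $\widetilde Q$ of $Q$, and the proper transforms of a few distinguished $G$-invariant cubic surfaces through $C_4$ (equivalently, hyperplane sections of $V_5$ through $C$). For each of these, the Mori chamber structure of $X$ --- determined by the two contractions $\pi$ and $\phi$ in the diagram --- yields $\tau(E)$ and $\mathrm{vol}(-K_X-tE)$ in closed form, and $A_X(E)>S_X(E)$ follows by direct intersection-theoretic computation. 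When $\dim Z\leq 1$ I would choose a $G$-invariant surface $S\subset X$ containing $Z$ (natural choices are $\widetilde Q$ or an anticanonical surface through a $G$-invariant line of $V_5$), and apply the inductive Abban--Zhuang estimate
$$
\frac{A_X(E)}{S_X(E)}\geq\min\left\{\frac{A_X(S)}{S_X(S)},\ \delta_Z\bigl(S,\Delta_S;W^{S}_{\bullet,\bullet}\bigr)\right\},
$$
then refine via a flag $Z\subset F\subset S$ to reduce to a one- and zero-dimensional computation of $S$-invariants on $S$.

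\smallskip

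\noindent\textbf{Main obstacle.} The delicate step is the zero-dimensional estimate: when $Z$ is a $G$-fixed point lying on $\widetilde Q\cap E_\pi$, or on the intersection of a ruling of $Q$ with $C_4$ above a ramification point of $\sigma$, the Zariski decomposition of $(-K_X-tS)|_S$ changes structure as $t$ crosses a threshold $t_0(\lambda)$ depending on $\lambda$, and the resulting bound becomes a rational inequality in $\lambda$. I expect its failure locus to be exactly $\{0,\pm 1,\pm 3\}$, matching the geometric degenerations of $C_4$: extra $\mathbb{G}_m$-symmetry at $\lambda=0$, singularity at $\lambda=\pm 1$, and projective equivalence to $ux^3=v(y^3+y^2x)$ at $\lambda=\pm 3$. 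Verifying this rational inequality uniformly for all other $\lambda$ closes the Abban--Zhuang bound and gives $\delta_G(X)>1$.
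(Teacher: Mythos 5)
Your high-level strategy --- finiteness of $\mathrm{Aut}(X)$, Zhuang's equivariant criterion, a case analysis on the centre $Z$ of a destabilising divisor, and Abban--Zhuang flag estimates --- is exactly the paper's, but as written the argument has a fatal gap at the very first step: the group $G=\langle\iota\rangle\cong\mumu_2$ generated by the single involution $([u:v],[x:y])\mapsto([v:u],[y:x])$ is too small for the case analysis to terminate. On $\mathbb{P}^3$ this involution fixes pointwise two disjoint lines, it fixes two points of $C_4$, and it leaves invariant infinitely many lines and planes; consequently there are infinitely many invariant candidate centres (points on the fixed lines, pencils of invariant planes, invariant curves through the fixed points of $C_4$), and the concluding step --- ruling out an invariant curve by showing that every invariant line lies in a finite, already-handled collection of invariant planes --- is simply false for this $G$. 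The paper instead works with $G\cong\mumu_2^2$, generated by your involution together with a second, genuinely non-obvious one: after re-normalising coordinates so that a different pair of the four ramification points of the triple cover $\sigma\colon C_4\to\mathbb{P}^1$ sits at the torus-fixed points, the curve again takes the form \eqref{equation:curve} for some new $\lambda'$, producing an involution sending $P_1\mapsto P_4$ and $P_2\mapsto P_3$. For this Klein four-group, $C_4$ and $Q$ contain no fixed points, $\mathbb{P}^3$ has exactly four fixed points, six invariant lines and four invariant planes, and this is what makes the whole reduction finite. Finding that second involution is the key input you are missing.

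Two further points. First, even after the surfaces $\widetilde{Q}$, $E$ and the four invariant planes are excluded, there remains the case of a $G$-invariant irreducible curve $Z$ contained in none of them; your plan of ``choosing a $G$-invariant surface containing $Z$'' does not obviously produce a surface on which the flag estimate closes. The paper handles this residual case by a different mechanism: $\alpha_{G,Z}(X)<\tfrac{3}{4}$ yields a $G$-invariant $\mathbb{Q}$-divisor $D\sim_{\mathbb{Q}}-K_X$ with $Z\subset\mathrm{Nklt}(X,\mu D)$ for some $\mu<\tfrac{3}{4}$, and the classification of non-klt centres of $(\mathbb{P}^3,\mu\pi(D))$ forces $\pi(Z)$ to be a $G$-invariant line, contradicting the list of invariant lines. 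Second, your anticipated ``rational inequality in $\lambda$'' never arises: all the Zariski decompositions and volume integrals in the actual proof are computed on $\widetilde{Q}\cong\mathbb{P}^1\times\mathbb{P}^1$, on $E\cong\mathbb{F}_0$ or $\mathbb{F}_2$, and on the degree-$5$ del Pezzo surfaces $H_i$, and are completely independent of $\lambda$. The hypothesis $\lambda\notin\{0,\pm 1,\pm 3\}$ enters only through the geometry of the group action (smoothness of $C_4$, four distinct ramification points, hence finiteness of $\mathrm{Aut}(X)$ and the existence of $G\cong\mumu_2^2$ acting without fixed points on $C_4$), not through any numerical estimate.
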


Let us prove this theorem.
We suppose that $C_4$ is given by \eqref{equation:curve} with $\lambda\not\in\{0,\pm 1,\pm 3\}$.
Then the~triple cover $\sigma\colon C_4\to\mathbb{P}^1$ is ramified in four distinct points $P_1$, $P_2$, $P_3$, $P_4$,
which implies that $\mathrm{Aut}(Q,C_4)$ is a finite group, since $\mathrm{Aut}(Q,C_4)\subset\mathrm{Aut}(C_4,P_1+P_2+P_3+P_4)$.
Without loss of generality, we may assume that
\begin{align*}
P_1&=\big([1:0],[0:1]\big)=[0:1:0:0]\\
P_2&=\big([0:1],[1:0]\big)=[0:0:1:0],
\end{align*}
where we use both the coordinates on $Q\cong\mathbb{P}^1\times\mathbb{P}^1$ and $\mathbb{P}^3$ simultaneously.

Observe that the~group $\mathrm{Aut}(Q,C_4)$ contains an~involution $\tau$ that is given by
$$
\big([u:v],[x:y]\big)\mapsto\big([v:u],[y:x]\big).
$$
Let us identify $\mathrm{Aut}(\mathbb{P}^3,C_4)=\mathrm{Aut}(Q,C_4)$ using the isomorphism $Q\cong\mathbb{P}^1\times\mathbb{P}^1$ fixed above.
Then $\tau$ is given by $[x_0:x_1:x_2:x_3]\mapsto[x_3:x_2:x_1:x_0]$.
Note that $\tau$ swaps $P_1$ and~$P_2$,
and the $\tau$-fixed points in $C_4$ are $([1:1],[1:1])$ and $([1:-1],[1:-1])$,
which are not ramification points of the triple cover $\sigma$.
This shows that $\tau$ swaps the~points $P_3$ and~$P_4$.
In~fact, the group $\mathrm{Aut}(Q,C_4)$ is larger than its subgroup $\langle\tau\rangle\cong\mumu_2$.
Indeed, one can change coordinates $([u:v],[x:y])$ on $Q$ such that
\begin{align*}
P_1&=([1:0],[0:1]),\\ 
P_4&=([0:1],[1:0]),
\end{align*} 
and the~curve $C_4$ is given by
$$
u\big(x^3+\lambda^\prime x^2y\big)=v\big(y^3+\lambda^\prime y^2x\big)
$$
for some complex number $\lambda^\prime\not\in\{0,\pm 1,\pm 3\}$.
This gives an~involution $\iota\in\mathrm{Aut}(Q,C_4)$ such that $\iota(P_1)=P_4$ and $\iota(P_2)=P_3$.
Let $G$ be the~subgroup $\langle\tau,\iota\rangle\subset\mathrm{Aut}(Q,C_4)=\mathrm{Aut}(\mathbb{P}^3,C_4)$.
Then $G\cong\mumu_2^2$.
Note that the group $\mathrm{Aut}(\mathbb{P}^3,C_4)$ can be larger for some $\lambda\in\mathbb{C}\setminus\{0,\pm 1,\pm 3\}$.
For instance, if $\lambda=\pm\sqrt{3}$, then $\mathrm{Aut}(\mathbb{P}^3,C_4)\cong\mathfrak{A}_4$, c.f. \cite[Example~4.4.6]{ACCFKMGSSV}.

The $G$-action on $C_4$ is faithful, so that the~curve $C_4$ does not contain $G$-fixed points.
Hence, the quadric $Q$ does not contain $G$-fixed points, since otherwise $Q$ would contain a $G$-invariant
curve of degree $(1,0)$, which would intersect $C_4$ by a $G$-fixed point.
So, in particular, we see that $\mathbb{P}^3$ contains finitely many $G$-fixed points.
Since the $G$-action on $\mathbb{P}^3$ is given by $4$-dimensional linear representation of the group $G$,
we conclude this representation splits as a sum of $4$ distinct one-dimensional representations,
which implies that the~space $\mathbb{P}^3$ contains exactly four $G$-fixed points.
Denote these points by $O_1$,~$O_2$,~$O_3$,~$O_4$.
These four points are not co-planar.
For every $1\leqslant i<j\leqslant 4$, let $L_{ij}$ be the line in $\mathbb{P}^3$ that passes through $O_i$ and $O_j$.
Then the lines $L_{12}$, $L_{13}$, $L_{14}$, $L_{23}$, $L_{24}$, $L_{34}$ are $G$-invariant,
and they are the only $G$-invariant lines in $\mathbb{P}^3$.
For each  $1\leqslant i\leqslant 4$, let $\Pi_i$ be the plane in $\mathbb{P}^3$ determined by the three points $\{O_1, O_2,O_3,O_4\}\setminus  \{O_i\}$.
%Let $\Pi_1$, $\Pi_2$, $\Pi_3$, $\Pi_4$ be the planes in $\mathbb{P}^3$ such~that
%$$
%\big\{O_2,O_3,O_4\big\}\subset\Pi_1, \big\{O_1,O_3,O_4\big\}\subset\Pi_2, \big\{O_1,O_2,O_4\big\}\subset\Pi_3, \big\{O_1,O_2,O_3\big\}\subset\Pi_4.
%$$
Then the four planes $\Pi_1$, $\Pi_2$, $\Pi_3$, $\Pi_4$ are the only $G$-invariant planes in $\mathbb{P}^3$.

\begin{remark}
Each plane $\Pi_i$ intersects $C_4$ at four distinct points.
%One has $|\Pi_1\cap C_4|=|\Pi_2\cap C_4|=|\Pi_3\cap C_4|=|\Pi_4\cap C_4|=4$.
Indeed, if $|\Pi_i\cap C_4|<4$, then $\Pi_i\cap C_4$ is a $G$-orbit of length $2$, and $\Pi_i$ is tangent to $C_4$ at both the  points of this orbit.
Therefore, without loss of generality, we may assume that the~intersection $\Pi_i\cap C_4$ is just the~fixed locus of the~involution~$\tau$.
Then $\Pi_i\cap C_4=([1:1],[1:1])\cup([1:-1],[1:-1])$,
so~that $\Pi_i\vert_{Q}$ is a smooth conic that is given by 
$$
a(vx-uy)=b(ux-vy)
$$ 
for some~$[a:b]\in\mathbb{P}^1$.
But the~conic $\Pi_i\vert_{Q}$ cannot tangent $C_4$ at the~points $([1:1],[1:1])$ and $([1:-1],[1:-1])$,
so that $|\Pi_i\cap C_4|=4$. %Similarly, we see that $|\Pi_2\cap C_4|=|\Pi_3\cap C_4|=|\Pi_4\cap C_4|=4$.
\end{remark}

The curve $C_4$ contains exactly three $G$-orbits of length $2$,
and these $G$-orbits are just the~fixed loci of the involutions $\tau$, $\iota$, $\tau\circ\iota$ described earlier.
Let~$L$,~$L^\prime$ and $L^{\prime\prime}$ be the three lines in $\mathbb{P}^3$ such that $L\cap C_4$, $L^{\prime}\cap C_4$ and $L^{\prime\prime}\cap C_4$ are
the~fixed loci of the involutions $\tau$, $\iota$ and $\tau\circ\iota$, respectively.
Then $L$, $L^\prime$ and $L^{\prime\prime}$  are $G$-invariant lines, so~that they are three lines among $L_{12}$, $L_{13}$, $L_{14}$, $L_{23}$, $L_{24}$, $L_{34}$.
In fact, it easily follows from Remark that the lines $L$, $L^\prime$, $L^{\prime\prime}$  meet at one point. Therefore, we may assume that
$L\cap L^{\prime}\cap L^{\prime\prime}=O_4$ and $L=L_{14}$, $L^{\prime}=L_{24}$, $L^{\prime\prime}=L_{34}$. Then
\begin{align*}
\Pi_1\cap C_4&=\big(L^{\prime}\cap C_4\big)\cup\big(L^{\prime\prime}\cap C_4\big), \\
\Pi_2\cap C_4&=\big(L\cap C_4\big)\cup\big(L^{\prime\prime}\cap C_4\big), \\
\Pi_3\cap C_4&=\big(L\cap C_4\big)\cup\big(L^{\prime}\cap C_4\big).
\end{align*}
On the other hand, the intersection $\Pi_4\cap C_4$ is a $G$-orbit of length $4$.

\bigskip

\begin{center}
\begin{minipage}[m]{.25\linewidth}\setlength{\unitlength}{1.00mm}
\begin{center}
\begin{picture}(100,70)(25,5)

\qbezier[150](10,26)(50,31)(90,36)

\put(10,20){\line(6,5){60}}
\put(90.5,27){\line(-5,6){35}}
\put(50.3,10){\line(1,5){12}}

\put(10,30){\line(3,-1){55}}
\put(45,12){\line(5,3){50}}

\thicklines

\put(64,9.5){\mbox{ $L_{12}$}}
\put(92,44){\mbox{$L_{23}$}}
\put(0,25){\mbox{ $L_{13}$}}

\put(3,18){\mbox{ $L_{14}$}}

\put(57,72){\mbox{ $L_{24}$}}

\put(90,25){\mbox{ $L_{34}$}}

\put(29,56){\mbox{ $C_4$}}

\blue{
%%%% quartic curve
\qbezier(53,47)(105,70)(70,35)

\qbezier(44.5,32.5)(47,12)(70,35)

\qbezier(44,35)(35,80)(30,26)

\qbezier[60](40,27)(27,0)(30,26)

\qbezier(40,27)(53,50)(55.8,41.7)

\qbezier(57,39.9)(58.5,37)(60,30)

%\qbezier(40,27)(55,50)(60,30)

\qbezier[60](73,32)(62,0)(60,29)

\qbezier(73,32)(74,35)(74,37)

}

\put(48.8,14.8){\mbox{ $\bullet$}}
\put(50,10.5){\mbox{ $O_2$}}

\put(58.1,61.2){\mbox{ $\bullet$}}
\put(52,62){\mbox{ $O_4$}}

\put(16,26){\mbox{ $\bullet$}}
\put(15,22){\mbox{ $O_1$}}

\put(81,34){\mbox{ $\bullet$}}
\put(79,30){\mbox{ $O_3$}}

\put(65.2,52){\mbox{ $+$}}
\put(54.7,48.1){\mbox{ $\times$}}

\put(38.1,45){\mbox{ $+$}}
\put(28,36.7){\mbox{ $+$}}

\put(49.6,21.9){\mbox{ $\times$}}
\put(74,41.7){\mbox{ $+$}}

\put(27.7,25){\mbox{ \tiny $\bullet$}}
\put(37.7,26){\mbox{ \tiny $\bullet$}}
\put(57.8,28.7){\mbox{ \tiny $\bullet$}}
\put(70.6,30.7){\mbox{ \tiny $\bullet$}}

\end{picture}
\end{center}
\end{minipage}
\end{center}

\bigskip

Since $C_4$ is $G$-invariant, the action of the group $G$ lifts to the~threefold $X$,
so that we also identify $G$ with a~subgroup of the~group $\mathrm{Aut}(X)$.
Let~$E$ be the~$\pi$-exceptional surface, let $\widetilde{Q}$ be the proper transform of the quadric $Q$ on the~threefold $X$,
let $H_1$, $H_2$, $H_3$ and $H_4$ be the proper transforms on $X$ of the $G$-invariant planes $\Pi_1$, $\Pi_2$, $\Pi_3$ and $\Pi_4$, respectively, and
let $H$ be the~proper transform on $X$ of a~general hyperplane in $\mathbb{P}^3$.
Then
$$
-K_X\sim 2\widetilde{Q}+E\sim \widetilde{Q}+2H_1\sim \widetilde{Q}+2H_2\sim \widetilde{Q}+2H_3\sim \widetilde{Q}+2H_4\sim 4H-E,
$$
and the surfaces $E$, $\widetilde{Q}$, $H_1$, $H_2$, $H_3$, $H_4$ are $G$-invariant.
Observe that $\widetilde{Q}\cong Q\cong\mathbb{P}^1\times\mathbb{P}^1$,
and $H_1$, $H_2$, $H_3$, $H_4$ are smooth del Pezzo surfaces of degree $5$.

\begin{claim}
Let $S$ be a possibly reducible $G$-invariant surface in $X$ such that $-K_X\sim_{\mathbb{Q}}\mu S+\Delta$,
where $\Delta$ is an effective $\mathbb{Q}$-divisor, and $\mu$ is a positive rational number such that $\mu>\frac{4}{3}$.
Then $S$ is one of the surfaces $\widetilde{Q}$, $H_1$, $H_2$, $H_3$, $H_4$.
\end{claim}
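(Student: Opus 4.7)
The plan is to exploit the structure of the Picard group and the pseudo-effective cone of $X$. Since $X$ has Picard rank $2$, we have $\mathrm{Pic}(X)=\mathbb{Z}H\oplus\mathbb{Z}E$ with $-K_X\sim 4H-E$. The two Mori contractions $\pi$ (contracting $E$) and $\phi$ (contracting $\widetilde{Q}$) show that the pseudo-effective cone of $X$ has exactly the two extremal rays generated by $E$ and by $\widetilde{Q}\sim 2H-E$. Equivalently, a class $aH+bE$ is pseudo-effective precisely when $a\geqslant 0$ and $a+2b\geqslant 0$.

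I would then write the class of $S$ as $S\sim DH+BE$ with $D\in\mathbb{Z}_{\geqslant 0}$ and $B\in\mathbb{Z}$. The requirement that $\Delta\sim(4-\mu D)H+(-1-\mu B)E$ be pseudo-effective reduces to the two inequalities $\mu D\leqslant 4$ and $\mu(D+2B)\leqslant 2$. Since $\mu>4/3$ and both $D$ and $D+2B$ are integers, we conclude $D\leqslant 2$ and $D+2B\leqslant 1$.

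A short case analysis then finishes the proof. If $D=0$, then $S$ is a positive multiple of $E$, so $B\geqslant 1$, which contradicts $2B\leqslant 1$. If $D=1$, then $S$ contains a unique prime component whose image in $\mathbb{P}^3$ is a plane; since $G$ permutes the components of $S$ and this plane is the only one, the plane is itself $G$-invariant, hence equals one of $\Pi_1,\ldots,\Pi_4$. No plane contains the twisted quartic $C_4$ (each $\Pi_i$ meets $C_4$ in four distinct points, by the preceding Remark), so this component contributes $0$ to $B$, giving $B\geqslant 0$, and $1+2B\leqslant 1$ forces $B=0$, so $S=H_i$. If $D=2$, the degree-$2$ part of $S$ is either a union of two planes --- which forces $B\geqslant 0$ and hence $D+2B\geqslant 2$, impossible --- or a single irreducible quadric; and since $Q$ is the unique quadric in $\mathbb{P}^3$ containing $C_4$, any $G$-invariant quadric different from $Q$ again contributes $0$ to $B$ and is excluded by the same reasoning. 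So the quadric component of $S$ must be $Q$; thus $S$ contains $\widetilde{Q}$, and $D+2B\leqslant 1$ rules out any additional $E$-component, giving $S=\widetilde{Q}$.

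The main step to set up carefully is the identification of the pseudo-effective cone via the two contractions $\pi$ and $\phi$, together with the handling of reducible $S$ at the level of numerical classes rather than support; once these are in place the claim collapses to the elementary integer inequalities above.
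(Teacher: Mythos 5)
Your proof is correct and follows the same route as the paper, whose entire argument is the one-line observation that $\mathrm{Eff}(X)$ is generated by $E$ and $\widetilde{Q}$; you have simply written out in full the integrality and case analysis that the paper leaves implicit. The identification of the two extremal rays via the contractions $\pi$ and $\phi$ (pushforward of a pseudo-effective class is pseudo-effective, and each contracted divisor pushes forward to zero) is exactly the standard justification of the fact the paper cites.
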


\begin{proof}
This follows from the fact that the cone $\mathrm{Eff}(X)$ is generated by $E$ and $\widetilde{Q}$.
\end{proof}

Suppose $X$ is not K-stable. Since $\mathrm{Aut}(X)$ is finite, the~threefold $X$ is not~K-polystable.
Then, by \cite[Corollary~4.14]{Zhuang}, there is a~$G$-invariant prime divisor $F$ over $X$ with $\beta(F)\leqslant 0$,
see \cite[\S~1.2]{ACCFKMGSSV} for the precise definition of $\beta(F)$.
Let us seek for a~contradiction.

Let $Z$ be the center of $F$ on $X$.
%We let $Z=C_X(F)$.
Then $Z$ is not a~surface by \cite[Theorem~3.7.1]{ACCFKMGSSV},
so that $Z$ is either a~$G$-invariant irreducible curve or a~$G$-fixed point.
In the latter case, the point $\pi(Z)$ must be one of the $G$-fixed points $O_1$, $O_2$, $O_3$, $O_4$,
so that the~point $Z$ is not contained in $\widetilde{Q}\cup E$.
Let us use Abban--Zhuang theory \cite{AbbanZhuang} to show that $Z$ does not lie on~$\widetilde{Q}\cup E$ in the former case.

\begin{lemma}
The center $Z$ cannot be contained in $\widetilde{Q}\cup E$.
%One has $Z\not\subset\widetilde{Q}\cup E$.
\end{lemma}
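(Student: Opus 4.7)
The plan is to invoke Abban--Zhuang theory \cite{AbbanZhuang} with the plt-type surfaces $\widetilde{Q}$ and $E$. As noted just before the lemma, a $G$-fixed center $Z$ satisfies $\pi(Z)\in\{O_1,O_2,O_3,O_4\}$, and none of these points lies on $Q\cup C_4$, so $Z\not\subset\widetilde{Q}\cup E$ in that case. It therefore suffices to rule out the possibility that $Z$ is a $G$-invariant irreducible curve contained in $\widetilde{Q}$ or in $E$.

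First I would compute the $S$-invariants $S_X(\widetilde{Q})$ and $S_X(E)$. Writing $H=\pi^{*}(\text{hyperplane})$ and $H'=\phi^{*}(\text{hyperplane on }V_5)$, one has $H^3=1$, $H^2H'=3$, $HH'^2=5$, $H'^3=5$, together with $-K_X\sim H+H'$, $\widetilde{Q}\sim H'-H$ and $E\sim 3H-H'$. Consequently $-K_X-t\widetilde{Q}\sim(1+t)H+(1-t)H'$ is nef on $[0,1]$, while on $[1,2]$ its Zariski decomposition has positive part $(4-2t)H$ and negative part $(t-1)E$; a direct integration gives $S_X(\widetilde{Q})=43/60<1$. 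Similarly, $-K_X-tE\sim 2\widetilde{Q}+(1-t)E$ has a Zariski decomposition with chamber-break at $t=1/3$, and one finds $S_X(E)=161/540<1$.

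For a $G$-invariant irreducible curve $Z\subset\widetilde{Q}$ the Abban--Zhuang inequality reads
\[
\delta_Z(X)\ \geqslant\ \min\!\left\{\frac{1}{S_X(\widetilde{Q})},\ \delta_Z\!\bigl(\widetilde{Q};W^{\widetilde{Q}}_{\bullet,\bullet}\bigr)\right\},
\]
and since $1/S_X(\widetilde{Q})=60/43>1$ it suffices to prove $\delta_Z(\widetilde{Q};W^{\widetilde{Q}}_{\bullet,\bullet})>1$, which in turn amounts to checking $S(W^{\widetilde{Q}}_{\bullet,\bullet};Z)<1$ together with a point-by-point refinement. Because $C_4$ has bidegree $(1,3)$ on $\widetilde{Q}\cong\mathbb{P}^1\times\mathbb{P}^1$ and is $G$-invariant, the $G$-action on $\widetilde{Q}$ cannot interchange the two rulings, so $G\subset\mathrm{PGL}_2\times\mathrm{PGL}_2$; combined with the absence of $G$-fixed points on $\widetilde{Q}$, this forces each projection $G\to\mathrm{PGL}_2$ to be a faithful copy of the Klein four-group. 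The $G$-invariant irreducible curves on $\widetilde{Q}$ therefore form a short explicit list (the curve $C_4$ of bidegree $(1,3)$ together with finitely many candidates of low bidegree picked out by the invariant parts of $H^0(\mathcal{O}(a,b))$), and each is treated using the restrictions $H|_{\widetilde{Q}}=(1,1)$, $H'|_{\widetilde{Q}}=(0,2)$, $E|_{\widetilde{Q}}=(1,3)$ and the Zariski decomposition above. The case $Z\subset E$ is handled analogously via the ruled-surface structure $E\to C_4$; here the much smaller value of $S_X(E)$ makes the computation comparatively easy.

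The main obstacle is the borderline curve $Z=C_4=\widetilde{Q}\cap E$, which lies in both surfaces simultaneously and for which the negative part $(t-1)C_4$ of the Zariski decomposition on $\widetilde{Q}$ contributes directly to $S\bigl(W^{\widetilde{Q}}_{\bullet,\bullet};C_4\bigr)$ on the interval $[1,2]$. Because $S_X(\widetilde{Q})=43/60$ is already not far from $1$, this calculation must be carried out with precision; once the estimate for $Z=C_4$ is in hand, the remaining $G$-invariant curves on $\widetilde{Q}$ and on $E$ are disposed of with considerable room to spare.
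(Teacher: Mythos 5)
Your framework is the paper's: apply Abban--Zhuang to the flags through $\widetilde{Q}$ and $E$, and your numerics are correct as far as they go ($S_X(\widetilde{Q})=\tfrac{43}{60}$, $S_X(E)=\tfrac{161}{540}$, with the Zariski chambers breaking at $u=1$ and $u=\tfrac13$). But the step on which the whole argument hinges --- that ``the $G$-invariant irreducible curves on $\widetilde{Q}$ form a short explicit list'' --- is false, and this is a genuine gap. For $G\cong\mumu_2^2$ acting on $\widetilde{Q}\cong\mathbb{P}^1\times\mathbb{P}^1$ through a faithful Klein action on each factor, already $H^0(\mathcal{O}_{\widetilde{Q}}(2,2))$ contains an at least $2$-dimensional space of $G$-invariant sections (each $\mathrm{Sym}^2$ of the $2$-dimensional representation splits into three characters, and two $3$-element subsets of the four characters of $\mumu_2^2$ must overlap), so there are positive-dimensional families of $G$-invariant irreducible curves, and more in every higher bidegree. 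A curve-by-curve check is therefore impossible as stated, and you give no reason why high-bidegree invariant curves may be ignored. The missing idea is a monotonicity reduction: since $C_4\cong\mathbb{P}^1$ has no $G$-fixed points, $\widetilde{Q}$ carries no $G$-invariant curve of bidegree $(1,0)$ or $(0,1)$, so every $G$-invariant curve $Z\subset\widetilde{Q}$ has bidegree $(a,b)$ with $a,b\geqslant 1$, i.e.\ $|Z-\Delta|\ne\varnothing$ for the diagonal $\Delta$; hence $\mathrm{vol}(P(u)\vert_{\widetilde{Q}}-vZ)\leqslant\mathrm{vol}(P(u)\vert_{\widetilde{Q}}-v\Delta)$, and a single integral (equal to $\tfrac{17}{30}$) disposes of all such curves at once, with $Z=\widetilde{Q}\cap E$ treated separately only because of the extra $\mathrm{ord}_Z(N(u)\vert_{\widetilde{Q}})$ term. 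The same device, with $\Delta$ replaced by the negative section $\mathbf{s}$ of $E\cong\mathbb{F}_n$, is what makes the $E$-case work, and there one must also first pin down $n\in\{0,2\}$.

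Two smaller points. First, your claim that ``the much smaller value of $S_X(E)$ makes the computation comparatively easy'' conflates $S_X(E)$ with the surface-level quantities $S(W^{E}_{\bullet,\bullet};Z)$, which come out around $\tfrac{1783}{3240}$ and $\tfrac{157}{270}$ --- not especially small --- so the $E$-case needs the same care as the $\widetilde{Q}$-case. Second, the curve you single out as the delicate borderline case, $Z=\widetilde{Q}\cap E$, is in fact the most comfortable one: $S(W^{\widetilde{Q}}_{\bullet,\bullet};\widetilde{Q}\cap E)=\tfrac{161}{540}\approx 0.3$. Nothing in this lemma is close to $1$ once the monotonicity reduction is in place; the genuinely tight computation in the paper occurs later, on the del Pezzo surfaces $H_i$. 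Since none of the surface-level integrals is actually carried out in your proposal and the reduction that would make them finite in number is absent, the argument as written does not yet constitute a proof.
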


\begin{proof}
We suppose that $Z\subset \widetilde{Q}\cup E$. Then $Z$ is an irreducible $G$-invariant curve,
because neither $\widetilde{Q}$ nor $E$ contains $G$-fixed points.
Let us use~notations introduced in~\mbox{\cite[\S~1.7]{ACCFKMGSSV}}.
Namely, we fix $u\in\mathbb{R}_{\geqslant 0}$.
Then $$-K_X-u\widetilde{Q}\sim_{\mathbb{R}} (4-2u)H+(u-1)E\sim_{\mathbb{R}} (1-u)\widetilde{Q}+2H,$$
so that $-K_X-u\widetilde{Q}$ is nef for $0\leqslant u\leqslant 1$, and  not pseudo-effective for $u>2$.
Thus, we have
$$
P\big(-K_X-u\widetilde{Q}\big)=
\left\{\aligned
&-K_X-u\widetilde{Q} \ & \text{if $0\leqslant u\leqslant 1$}, \\
&(4-2u)H\ &  \text{if $1\leqslant u\leqslant 2$},
\endaligned
\right.
$$
and
$$
N\big(-K_X-u\widetilde{Q}\big)=
\left\{\aligned
&0\ & \text{if $0\leqslant u\leqslant 1$}, \\
&(u-1)E\ & \text{if $1\leqslant u\leqslant 2$}.
\endaligned
\right.
$$
If $Z\subset\widetilde{Q}$, then \cite[Corollary~1.7.26]{ACCFKMGSSV} gives
$$
1\geqslant\frac{A_X(F)}{S_X(F)}\geqslant\min\Bigg\{\frac{1}{S_X(\widetilde{Q})},\frac{1}{S\big(W^{\widetilde{Q}}_{\bullet,\bullet};Z\big)}\Bigg\},
$$
where
$$
S_X\big(\widetilde{Q}\big)=\frac{1}{(-K_X)^3}\varint_0^2\mathrm{vol}\big(-K_X-u\widetilde{Q}\big)du=\frac{1}{(-K_X)^3}\varint_0^2\Big(P\big(-K_X-u\widetilde{Q}\big)\Big)^{3}du
$$
and
\begin{multline*}
S\big(W^{\widetilde{Q}}_{\bullet,\bullet};Z\big)=
\frac{3}{(-K_X)^3}\left\{\varint_0^2\Big(P\big(-K_X-u\widetilde{Q}\big)^{2}\cdot\widetilde{Q}\Big)\cdot\mathrm{ord}_{Z}\Big(N\big(-K_X-u\widetilde{Q}\big)\big\vert_{\widetilde{Q}}\Big)du+\right.\\
\left.+\varint_0^2\varint_0^\infty \mathrm{vol}\Big(P\big(-K_X-u\widetilde{Q}\big)\big\vert_{\widetilde{Q}}-vZ\Big)dvdu\right\}.%
\end{multline*}
Therefore, we conclude that $S(W_{\bullet,\bullet}^{\widetilde{Q}};Z)\geqslant 1$, because $S_X(\widetilde{Q})<1$, see \cite[Theorem~3.7.1]{ACCFKMGSSV}.
Similarly, if $Z\subset E$, then we get $S(W_{\bullet,\bullet}^{E};Z)\geqslant 1$.

Fix an~isomorphism $\widetilde{Q}\cong Q\cong\mathbb{P}^1\times\mathbb{P}^1$ such that $E\vert_{\widetilde{Q}}$ is a divisor in $\widetilde{Q}$ of degree $(1,3)$.
For $(a,b)\in\mathbb{R}^2$, let $\mathcal{O}_{\widetilde{Q}}(a,b)$ be the class of a divisor of degree $(a,b)$  in $\mathrm{Pic}(\widetilde{Q})\otimes\mathbb{R}$. Then
$$
P(-K_X-u\widetilde{Q})\vert_{\widetilde{Q}}\sim_{\mathbb{R}}
\left\{\aligned
&\mathcal{O}_{\widetilde{Q}}(3-u,u+1)\ & \text{if $0\leqslant u\leqslant 1$}, \\
&\mathcal{O}_{\widetilde{Q}}(4-2u,4-2u)\ & \text{if $1\leqslant u\leqslant 2$}.
\endaligned
\right.
$$
Therefore, if $Z=E\cap\widetilde{Q}$, then
\[\aligned
S\big(W_{\bullet,\bullet}^{\widetilde{Q}};Z\big)&=\frac{1}{10}\left\{\varint_{1}^{2}2(4-2u)^2(u-1)du+
\varint_{0}^{1}\varint_{0}^{\infty}\mathrm{vol}\Big(\mathcal{O}_{\widetilde{Q}}(3-u-v,u+1-3v)\Big)dvdu\right.\\
&\pushright{+\left.\varint_{1}^{2}\varint_{0}^{\infty}\mathrm{vol}\Big(\mathcal{O}_{\widetilde{Q}}(4-2u-v,4-2u-3v)\Big)dvdu\right\}}\\
&=\frac{2}{30}+\frac{1}{10}\left\{\varint_{0}^{1}\varint_{0}^{\frac{u+1}{3}}2(u+1-3v)(3-u-v)dvdu\right.\\
&\pushright{\left.+\varint_{1}^{2}\varint_{0}^{\frac{4-2u}{3}}2(4-2u-3v)(4-2u-v)dvdu\right\}}\\
&=\frac{161}{540}.\\
\endaligned
\]
To estimate $S(W_{\bullet,\bullet}^{\widetilde{Q}};Z)$ in the case when $Z\subset\widetilde{Q}$ and $Z\ne E\cap\widetilde{Q}$,
observe that $|Z-\Delta|\ne\varnothing$, where $\Delta$ is the~diagonal curve in $\widetilde{Q}$.
Indeed, this follows from the fact that $\widetilde{Q}$ contains neither $G$-invariant curves of degree $(0,1)$
nor $G$-invariant curves of degree $(1,0)$, which in turns easily follows from the fact that the~curve $C_4\cong\mathbb{P}^1$ does not have $G$-fixed points.
Thus, if  $Z\subset\widetilde{Q}$ and $Z\ne E\cap\widetilde{Q}$, then
\[\aligned
S\big(W_{\bullet,\bullet}^{\widetilde{Q}};Z\big)&\leqslant
\frac{1}{10}\varint_0^2\varint_0^\infty \mathrm{vol}\Big(P\big(-K_X-u\widetilde{Q}\big)\big\vert_{\widetilde{Q}}-v\Delta\Big)dvdu\\
&=\frac{1}{10}\left\{\varint_{0}^{1}\varint_{0}^{\infty}\mathrm{vol}\Big(\mathcal{O}_{\widetilde{Q}}(3-u-v,u+1-v)\Big)dvdu+\right.\\
&\pushright{\left.+\varint_{1}^{2}\varint_{0}^{\infty}\mathrm{vol}\Big(\mathcal{O}_{\widetilde{Q}}(4-2u-v,4-2u-v)\Big)dvdu\right\}}\\
&=\frac{1}{10}\left\{\varint_{0}^{1}\varint_{0}^{u+1}2(u+1-v)(3-u-v)dvdu+\varint_{1}^{2}\varint_{0}^{4-2u}2(4-2u-v)^2dvdu\right\}\\
&=\frac{17}{30}.
\endaligned\]
Therefore,  $Z\not\subset\widetilde{Q}$, and hence $Z\subset E$ and $Z\ne\widetilde{Q}\cap E$.

One has $E\cong\mathbb{F}_n$ for some integer $n\geqslant 0$. It follows from the argument as in the proof of \cite[Lemma~4.4.16]{ACCFKMGSSV}
that $n$ is either $0$ or $2$.
Indeed, let $\mathbf{s}$ be the section of the~projection $E\to C_4$ such that $\mathbf{s}^2=-n$,
and let $\mathbf{l}$ be its~fiber. Then $-E\big\vert_{E}\sim \mathbf{s}+k\mathbf{l}$
for some integer~$k$. But
$$
-n+2k=E^3=-\mathrm{c}_1(\mathcal{N}_{C_4/\mathbb{P}^3})=-14,
$$
so that $k=\frac{n-14}{2}$. Then
$$
\widetilde{Q}\big\vert_{E}\sim \big(2H-E\big)\big\vert_{E}\sim\mathbf{s}+(k+8)\mathbf{l}=\mathbf{s}+\frac{n+2}{2}\mathbf{l},
$$
which implies that $\widetilde{Q}\vert_{E}\not\sim\mathbf{s}$.
Moreover, we know that $\widetilde{Q}\vert_{E}$ is a~smooth irreducible curve, since the~quadric surface $Q$ is smooth.
Thus, since $\widetilde{Q}\big\vert_{E}\ne\mathbf{s}$, we have
$$
0\leqslant \widetilde{Q}\big\vert_{E}\cdot\mathbf{s}=\Big(\mathbf{s}+\frac{n+2}{2}\mathbf{l}\Big)\cdot\mathbf{s}=-n+\frac{n+2}{2}=\frac{2-n}{2}
$$
so that $n=0$ or $n=2$. Now, let us show that $S(W_{\bullet,\bullet}^{E};Z)<1$ in both cases.

For $u\geqslant 0$,
$$-K_X-uE\sim 2\widetilde{Q}+(1-u)E,$$
so that $-K_X-uE$ is pseudo-effective if and only if $u\leqslant 1$,
and it is nef if and only if $u\leqslant\frac{1}{3}$.
Furthermore, if $\frac{1}{3}\leqslant u\leqslant 1$, then
$$
P(-K_X-uE)=(2-2u)(3H-E)
$$
and $N(-K_X-uE)=(3u-1)\widetilde{Q}$. Thus, if $n=0$, we have
$$
P(-K_X-uE)\big\vert_{E}=
\left\{\aligned
&(1+u)\mathbf{s}+(9-7u)\mathbf{l}\ & \text{if $0\leqslant u\leqslant \frac{1}{3}$}, \\
&(2-2u)\mathbf{s}+(10-10u)\mathbf{l}\ & \text{if $\frac{1}{3}\leqslant u\leqslant 1$}.
\endaligned
\right.
$$
Similarly, if $n=2$, then
$$
P(-K_X-uE)\big\vert_{E}=
\left\{\aligned
&(1+u)\mathbf{s}+(10-6u)\mathbf{l}\ & \text{if $0\leqslant u\leqslant \frac{1}{3}$}, \\
&(2-2u)\mathbf{s}+(12-12u)\mathbf{l}\ & \text{if $\frac{1}{3}\leqslant u\leqslant 1$}.
\endaligned
\right.
$$
Recall that $Z\ne \widetilde{Q}\cap E$. Moreover, we have $Z\not\sim\mathbf{l}$, since $\pi(Z)$ is not one of the $G$-fixed points $O_1$, $O_2$, $O_3$, $O_4$.
Thus, using \cite[Corollary~1.7.26]{ACCFKMGSSV}, we get
$$
S(W_{\bullet,\bullet}^{E};Z)=\frac{1}{10}\varint_{0}^{1}\varint_{0}^{\infty}\mathrm{vol}\Big(P(u)\big\vert_{E}-vZ\Big)dvdu\leqslant\frac{1}{10}\varint_{0}^{1}\varint_{0}^{\infty}\mathrm{vol}\Big(P(u)\big\vert_{E}-v\mathbf{s}\Big)dvdu,
$$
because the~divisor $|Z-\mathbf{s}|\ne\varnothing$.

Consequently, if $n=0$, then
%\begin{multline*}
%S(W_{\bullet,\bullet}^{E};Z)\leqslant\frac{1}{10}\varint_{0}^{\frac{1}{3}}\varint_{0}^{\infty}\mathrm{vol}\Big((1+u)\mathbf{s}+(9-7u)\mathbf{l}-v\mathbf{s}\Big)dvdu+\\
%+\frac{1}{10}\varint_{\frac{1}{3}}^{1}\varint_{0}^{\infty}\mathrm{vol}\Big((2-2u)\mathbf{s}+(10-10u)\mathbf{l}-v\mathbf{s}\Big)dvdu=\\
%=\frac{1}{10}\varint_{0}^{\frac{1}{3}}\varint_{0}^{1+u}2(1+u-v)(9-7u)dvdu+\frac{1}{10}\varint_{\frac{1}{3}}^{1}\varint_{0}^{2-2u}2(2-2u-v)(10-10u)dvdu=\frac{1783}{3240}.
%\end{multline*}
\[\aligned
\phantom{s}&
S(W_{\bullet,\bullet}^{E};Z) \leqslant\\
&\phantom{=}\frac{1}{10}\left\{\varint_{0}^{\frac{1}{3}}\varint_{0}^{\infty}\mathrm{vol}\Big((1+u)\mathbf{s}+(9-7u)\mathbf{l}-v\mathbf{s}\Big)dvdu+\right.\\
&\pushright{\left.+\varint_{\frac{1}{3}}^{1}\varint_{0}^{\infty}\mathrm{vol}\Big((2-2u)\mathbf{s}+(10-10u)\mathbf{l}-v\mathbf{s}\Big)dvdu\right\}}\\
&=\frac{1}{10}\left\{\varint_{0}^{\frac{1}{3}}\varint_{0}^{1+u}2(1+u-v)(9-7u)dvdu+\varint_{\frac{1}{3}}^{1}\varint_{0}^{2-2u}2(2-2u-v)(10-10u)dvdu\right\}\\
&=\frac{1783}{3240}.
\endaligned
\]
Similarly, if $n=2$, then
\[\aligned
\phantom{s}&
S(W_{\bullet,\bullet}^{E};Z)\leqslant \\
&\phantom{=}\frac{1}{10}\left\{\varint_{0}^{\frac{1}{3}}\varint_{0}^{\infty}\mathrm{vol}\Big((1+u)\mathbf{s}+(10-6u)\mathbf{l}-v\mathbf{s}\Big)dvdu+\right.\\
&\pushright{\left.+\varint_{\frac{1}{3}}^{1}\varint_{0}^{\infty}\mathrm{vol}\Big((2-2u)\mathbf{s}+(12-12u)\mathbf{l}-v\mathbf{s}\Big)dvdu\right\}}\\
&=\frac{1}{10}\left\{\varint_{0}^{\frac{1}{3}}\varint_{0}^{1+u}2(1+u-v)(9+v-7u)dvdu+
\varint_{\frac{1}{3}}^{1}\varint_{0}^{2-2u}2(2-2u-v)(10+v-10u)dvdu\right\}\\
&=\frac{157}{270}.\\
\endaligned
\]
In both cases, we have $S(W_{\bullet,\bullet}^{E};Z)<1$, which is a~contradiction.
\end{proof}

Now, we prove our main technical result using Abban--Zhuang theory, see also \cite[\S~1.7]{ACCFKMGSSV}.

\begin{proposition}
The center $Z$ is not contained in $H_1\cup H_2\cup H_3\cup H_4$.
\end{proposition}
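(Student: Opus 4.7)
The plan is to mirror the previous lemma by applying Abban--Zhuang theory to the flag $X\supset H_i$.

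\textbf{Step 1 (Zariski decomposition of $-K_X-uH_i$).} Since $\Pi_i$ does not contain $C_4$ and meets it transversally at four points, one has $H_i\sim H$ in $\mathrm{Pic}(X)$, and hence $-K_X-uH_i\sim(4-u)H-E$. The Mori cone $\overline{\mathrm{NE}}(X)$ is generated by a fiber of $\pi|_E$ and by the ruling of $\widetilde{Q}$ of class $(1,0)$ that is contracted by $\phi$; intersecting $(4-u)H-E$ with the latter gives $1-u$, so $-K_X-uH_i$ is nef for $0\le u\le 1$ and pseudo-effective precisely for $0\le u\le 2$. For $1\le u\le 2$ the Zariski decomposition reads $P(-K_X-uH_i)=(2-u)(3H-E)$ and $N(-K_X-uH_i)=(u-1)\widetilde{Q}$. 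A direct integration using $H^3=1$, $H^2\cdot E=0$, $H\cdot E^2=-4$ and $E^3=-14$ will give $S_X(H_i)=\tfrac{17}{30}<1$.

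\textbf{Step 2 (Classify possible centers $Z$).} By the previous lemma $Z\not\subset\widetilde{Q}\cup E$, and $Z$ is not a surface, so $Z$ is either a $G$-invariant irreducible curve on $H_i$ outside $\widetilde{Q}\cup E$, or a $G$-fixed point of $X$ off $\widetilde{Q}\cup E$, which must be one of the three points $\pi^{-1}(O_k)$ with $k\ne i$. The surface $H_i$ is a smooth del Pezzo of degree $5$, namely the blow-up of $\Pi_i\cong\mathbb{P}^2$ at the four points $\Pi_i\cap C_4$; using the standard basis $h,e_1,\dots,e_4$ of $\mathrm{Pic}(H_i)$, one has $P(u)|_{H_i}\sim(4-u)h-\sum_j e_j$ for $0\le u\le 1$ and $P(u)|_{H_i}\sim(2-u)(3h-\sum_j e_j)$ for $1\le u\le 2$. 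The $G$-action on $\{e_1,e_2,e_3,e_4\}$ has two orbits of length two when $i\in\{1,2,3\}$ and a single orbit of length four when $i=4$; together with the dP5 structure this yields a finite explicit list of $G$-invariant irreducible curves on $H_i$.

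\textbf{Step 3 (Bound $S$ and conclude).} By \cite[Corollary~1.7.26]{ACCFKMGSSV}, if $Z\subset H_i$ is a curve then $S(W^{H_i}_{\bullet,\bullet};Z)\ge 1$; since $Z\not\subset\widetilde{Q}$ the term coming from $N(-K_X-uH_i)|_{H_i}$ vanishes, and the bound reduces to showing $\tfrac{1}{10}\int_0^2\int_0^\infty\mathrm{vol}\bigl(P(-K_X-uH_i)|_{H_i}-vZ\bigr)\,dv\,du<1$. For each curve on the list from Step~2 we will compute the relevant integral, using the estimate $\mathrm{vol}(P(u)|_{H_i}-vZ)\le\mathrm{vol}(P(u)|_{H_i}-v\Delta)$ for a suitable $G$-invariant effective divisor $\Delta$ on $H_i$ with $|Z-\Delta|\ne\varnothing$ to simplify the computation. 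For the three $G$-fixed points we will instead refine to the two-step flag $\{X\supset H_i\supset\Delta\}$ for a well-chosen $G$-invariant curve $\Delta\subset H_i$ passing through the point, and apply the two-step Abban--Zhuang estimate.

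The main obstacle will be the case analysis: enumerating the $G$-invariant irreducible curves on each $H_i$ and, for every possible center, selecting the right test divisor $\Delta$ (or two-step flag) so that all the resulting integrals come out strictly below $1$. The case $i=4$ is combinatorially simpler because of the transitive $G$-action on the four exceptional curves, but it must be handled separately from $i\in\{1,2,3\}$; for the latter, the two $(-1)$-curves $h-e_a-e_b$ and $h-e_c-e_d$---the strict transforms of the $G$-invariant lines in $\Pi_i$ joining the two points of each $G$-orbit---will require particular care.
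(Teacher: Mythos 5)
Your overall strategy is the same as the paper's: apply Abban--Zhuang to the flag $X\supset H_i$, compute the Zariski decomposition of $-K_X-uH_i$ (your Step 1 matches, including $S_X(H_i)=\tfrac{17}{30}$), handle curves via a comparison divisor $\Delta$ with $|Z-\Delta|\ne\varnothing$, handle the $G$-fixed points via a two-step flag, and treat $i=4$ separately using the transitivity of $G$ on $\Pi_4\cap C_4$. Two points, however, need attention. First, a minor one: there is no \emph{finite} list of $G$-invariant irreducible curves on $H_i$ (there are $G$-invariant curves of every sufficiently large degree); what saves you is exactly the monotonicity trick you state afterwards, namely that $\mathrm{Pic}^G(H_i)$ is generated by $\ell$, $e_1+e_2$, $e_3+e_4$ (resp.\ $\ell$, $e_1+e_2+e_3+e_4$ for $i=4$), which forces $|Z-\ell_{ab}|\ne\varnothing$ for one of the two $G$-invariant $(-1)$-lines (resp.\ $|Z-\mathscr{C}|\ne\varnothing$ for the conic class), so the enumeration is unnecessary.

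The genuine gap is in Step 3: the expectation that ``all the resulting integrals come out strictly below $1$'' is false, and the argument as planned does not close. For $i\in\{1,2,3\}$ the critical value is
$$
S\big(W^{H_1}_{\bullet,\bullet};\ell_{12}\big)=\frac{1}{10}\varint_0^2\varint_0^\infty\mathrm{vol}\big(P(u)\vert_{H_1}-v\ell_{12}\big)\,dv\,du=\frac{107}{120}+\frac{13}{120}=1,
$$
exactly $1$, not less; likewise the two-step bound at the point over $O_4$ comes out to exactly $1$ (it is $\tfrac{1}{12}+\tfrac{11}{12}$). So you cannot conclude by the plain inequality $1\geqslant\min\{1/S_X(H_i),\,1/S(W^{H_i}_{\bullet,\bullet};Z)\}$, which is satisfied when $S(\cdots)=1$. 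You must invoke the refined (equality-case) statements of \cite[Corollary~1.7.26]{ACCFKMGSSV} and \cite[Theorem~1.7.30]{ACCFKMGSSV}: since $S_X(H_i)=\tfrac{17}{30}<1$, equality $S(W^{H_i}_{\bullet,\bullet};Z)=1$ would force $A_X(F)/S_X(F)=1/S_X(H_i)=\tfrac{30}{17}>1$, which is absurd; hence one gets the \emph{strict} inequality $S(W^{H_i}_{\bullet,\bullet};Z)>1$ for a curve center (and $S(W^{H_i,\ell_{12}}_{\bullet,\bullet,\bullet};Z)<1$ for a point center), and only then do the computations above yield a contradiction. Without this extra step your criterion is consistent with the existence of such a center, so the proposal as written does not prove the proposition.
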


\begin{proof}
We first suppose that $Z\subset H_1\cup H_2\cup H_3$.
Without loss of generality, we may assume that~$Z\subset H_1$. Then~$\pi(Z)\subset\Pi_1$.
Therefore, we see that one of the following two subcases are possible:
\begin{itemize}
\item either $\pi(Z)$ is one of the $G$-fixed points $O_2$, $O_3$, $O_4$,
\item or $Z$ is a~$G$-invariant irreducible curve in $H_1$.
\end{itemize}
We will deal with these subcases separately. In both subcases, we let $S=H_1$ for simplicity.
Recall that $S$ is a~smooth del Pezzo surface of degree $5$,
the surface $S$ is $G$-invariant, and the~action of the~group $G$ on the~surface $S$ is faithful.
Note also that $Z\not\subset\widetilde{Q}$ by Lemma.

Let us use notations introduced in \cite[\S~1.7]{ACCFKMGSSV}. Take $u\in\mathbb{R}_{\geqslant 0}$. Then
$$
-K_X-uS\sim_{\mathbb{R}} (4-u)H-E\sim_{\mathbb{R}} \widetilde{Q}+(2-u)H\sim_{\mathbb{R}} (u-1)\widetilde{Q}+(2-u)\big(3H-E\big).
$$
Let $P(u)=P(-K_X-uS)$ and $N(u)=N(-K_X-uS)$. Then
\[
P\big(u\big)=
\left\{\aligned
&-K_X-uS\ & \text{if $0\leqslant u\leqslant 1$}, \\
&(2-u)\big(3H-E\big)\ & \text{if $1\leqslant u\leqslant 2$},
\endaligned
\right.
\]
and
\[
N\big(u\big)=
\left\{\aligned
&0\ & \text{if $0\leqslant u\leqslant 1$}, \\
&(u-1)\widetilde{Q}\  & \text{if $1\leqslant u\leqslant 2$}.
\endaligned
\right.
\]
Note that $S_X(S)<1$, see \cite[Theorem~3.7.1]{ACCFKMGSSV}.
In fact, one can compute $S_X(S)=\frac{17}{30}$.

Let $\varphi\colon S\to \Pi_1$ be birational morphism induced by $\pi$.
Then $\varphi$ is a~$G$-equivariant blow up of the~four intersection points $\Pi_1\cap C_4$.
Let $\ell$ be the~proper transform on $S$ of a~general line in $\Pi_1$, and let $e_1$, $e_2$, $e_3$, $e_4$ be $\varphi$-exceptional curves,
and let $\ell_{ij}$ be the proper transform on the~surface $S$ of the line in $\Pi_1$ that passes through $\varphi(e_i)$ and $\varphi(e_j)$, where $1\leqslant i<j\leqslant 4$.
Then the cone $\overline{\mathrm{NE}}(S)$ is generated by the curves  $e_1$, $e_2$, $e_3$, $e_4$,
$\ell_{12}$, $\ell_{13}$, $\ell_{14}$, $\ell_{23}$, $\ell_{24}$, $\ell_{34}$.
Recall also that
$$
\Pi_1\cap C_4=\big(L_{24}\cap C_4\big)\cup\big(L_{34}\cap C_4\big).
$$
Therefore, we may assume that $L_{24}\cap C_4=\varphi(e_1)\cup\varphi(e_2)$ and $L_{34}\cap C_4=\varphi(e_3)\cup\varphi(e_4)$,
so that we have $\varphi(\ell_{12})=L_{24}$ and $\varphi(\ell_{34})=L_{34}$.

Observe that, the~group $\mathrm{Pic}^G(S)$ is generated by the~divisor classes $\ell$, $e_1+e_2$, $e_3+e_4$,
because both $L_{24}\cap C_4$ and $L_{34}\cap C_4$ are $G$-orbits of length $2$.
Therefore, if $Z$ is a curve, then $\varphi(Z)$ is a curve of degree $d\geqslant 1$, so that
$Z\sim d\ell-m_{12}(e_1+e_2)-m_{34}(e_3+e_4)$ for some non-negative integers $m_{12}$ and $m_{34}$, which gives
\[\aligned
Z&\sim (d-2m_{12})\ell+m_{12}(2\ell-e_1-e_2-e_3-e_4)+(m_{12}-m_{34})(e_3+e_4)\\
&\sim (d-2m_{12})(\ell_{12}+e_1+e_2)+m_{12}(\ell_{12}+\ell_{34})+(m_{12}-m_{34})(e_3+e_4)
\endaligned
\]
and
\[\aligned
Z&\sim (d-2m_{34})\ell+m_{34}(2\ell-e_1-e_2-e_3-e_4)+(m_{34}-m_{12})(e_1+e_2)\\
&\sim (d-2m_{34})(\ell_{34}+e_3+e_4)+m_{34}(\ell_{12}+\ell_{34})+(m_{34}-m_{12})(e_1+e_2).
\endaligned
\]
Moreover, if $Z\ne\ell_{12}$ and $Z\ne\ell_{34}$, then
$d-2m_{12}=Z\cdot\ell_{12}\geqslant 0$ and $d-2m_{34}=Z\cdot\ell_{34}\geqslant 0$.
Hence, if $Z$ is a curve, then $|Z-\ell_{12}|\ne\varnothing$ or $|Z-\ell_{34}|\ne\varnothing$.

On the other hand, if $Z$ is a curve, then \cite[Corollary~1.7.26]{ACCFKMGSSV} gives
$$
1\geqslant\frac{A_X(F)}{S_X(F)}\geqslant\min\Bigg\{\frac{1}{S_X(S)},\frac{1}{S\big(W^S_{\bullet,\bullet};Z\big)}\Bigg\}=\min\Bigg\{\frac{30}{17},\frac{1}{S\big(W^S_{\bullet,\bullet};Z\big)}\Bigg\},
$$
where
$$
S\big(W^S_{\bullet,\bullet};Z\big)=\frac{3}{(-K_X)^3}\varint_0^2\varint_0^\infty \mathrm{vol}\big(P(u)\big\vert_{S}-vZ\big)dvdu,
$$
because $Z\not\subset\widetilde{Q}$.
Moreover, if $S(W^S_{\bullet,\bullet};Z)=1$, then \cite[Corollary~1.7.26]{ACCFKMGSSV} gives
$$
1\geqslant\frac{A_X(E)}{S_X(E)}=\frac{1}{S_X(S)}=\frac{30}{17},
$$
which is absurd. Thus, if $Z$ is a curve, then $S(W^S_{\bullet,\bullet};Z)>1$, which gives
\[\aligned
1<S\big(W^S_{\bullet,\bullet}&;Z\big)=\frac{1}{10}\varint_0^2\varint_0^\infty \mathrm{vol}\big(P(u)\big\vert_{S}-vZ\big)dvdu\\
&\leqslant\max\Bigg\{
\frac{1}{10}\varint_0^2\varint_0^\infty \mathrm{vol}\big(P(u)\big\vert_{S}-v\ell_{12}\big)dvdu,
\frac{1}{10}\varint_0^2\varint_0^\infty \mathrm{vol}\big(P(u)\big\vert_{S}-v\ell_{34}\big)dvdu\Bigg\},\\
\endaligned
\]
because $|Z-\ell_{12}|\ne\varnothing$ or $|Z-\ell_{34}|\ne\varnothing$. Note also that
$$
S\big(W_{\bullet,\bullet}^S;\ell_{12}\big)=\frac{1}{10}\varint_0^2\varint_0^\infty \mathrm{vol}\big(P(u)\big\vert_{S}-v\ell_{12}\big)dvdu=\frac{1}{10}\varint_0^2\varint_0^\infty \mathrm{vol}\big(P(u)\big\vert_{S}-v\ell_{34}\big)dvdu.
$$
Hence, if $Z$ is a curve, then the second statement in  \cite[Corollary~1.7.26]{ACCFKMGSSV} gives
$$
1<S\big(W^S_{\bullet,\bullet};Z\big)\leqslant S\big(W_{\bullet,\bullet}^S;\ell_{12}\big)=\frac{1}{10}\varint_0^2\varint_0^\infty \mathrm{vol}\big(P(u)\big\vert_{S}-v\ell_{12}\big)dvdu.
$$
Let us compute $S(W_{\bullet,\bullet}^S;\ell_{12})$.
For  $0\leqslant u\leqslant 1$ and $v\geqslant 0$, we have
$$
P(u)\big\vert_{S}-v\ell_{12}=\big(-K_X-uS\big)\big\vert_{S}-v\ell_{12}\sim_{\mathbb{R}}(4-u-v)\ell-(1-v)\big(e_1+e_2\big)-e_3-e_4.
$$
Therefore, if $0\leqslant v\leqslant 1$, then this divisor is nef, and its volume is $u^2+2uv-v^2-8u-4v+12$.
Similarly, if $1\leqslant v\leqslant 2-u$, then its Zariski decomposition is
$$
P(u)\big\vert_{S}-v\ell_{12}\sim_{\mathbb{R}}
\underbrace{(4-u-v)\ell-e_3-e_4}_{\text{positive part}}+\underbrace{(v-1)\big(e_1+e_2\big)}_{\text{negative part}},
$$
so that its volume is $u^2+2uv+v^2-8u-8v+14$.
Likewise, if $2-u\leqslant v\leqslant 3-u$, then the Zariski decomposition of the divisor $P(u)\vert_{S}-v\ell_{12}$ is
$$
P(u)\big\vert_{S}-v\ell_{12}\sim_{\mathbb{R}}
\underbrace{(3-u-v)(2\ell-e_3-e_4)}_{\text{positive part}}+\underbrace{(v-1)\big(e_1+e_2\big)+(v-2+u)\ell_{34}}_{\text{negative part}},
$$
so that its volume is $2(3-u-v)^2$. If $v>3-u$, then $P(u)\vert_{S}-v\ell_{12}$ is not pseudo-effective, so that the volume of this divisor is zero.
Thus, we have
\[
\aligned
\frac{1}{10}\varint_{0}^{1}\varint_{0}^{\infty}\mathrm{vol}&\Big(P(u)\big\vert_{S}-v\ell_{12}\Big)dvdu\\
&=\frac{1}{10}\varint_{0}^{1}\varint_{0}^{3-u}\mathrm{vol}\Big(P(u)\big\vert_{S}-v\ell_{12}\Big)dvdu\\
&=\frac{1}{10}\left\{\varint_{0}^{1}\varint_{0}^{1}\big(u^2+2uv-v^2-8u-4v+12\big)dvdu+\right.\\
&\pushright{\phantom{space}\left.+\varint_{0}^{1}\varint_{1}^{2-u}\big(u^2+2uv+v^2-8u-8v+14\big)dvdu+\varint_{0}^{1}\varint_{2-u}^{3-u}2(3-u-v)^2dvdu\right\}}\\
&=\frac{107}{120}.
\endaligned
\]
Similarly, if $1\leqslant u\leqslant 2$, then
$$
P(u)\big\vert_{S}-v\ell_{12}\sim_{\mathbb{R}}(6-3u-v)\ell+(v+u-2)(e_1+e_2)+(u-2)(e_3+e_4).
$$
If $0\leqslant v\leqslant 2-u$, this divisor is nef, and its volume is $5u^2+2uv-v^2-20u-4v+20$.
Likewise, if $2-u\leqslant v\leqslant 4-2u$, then its Zariski decomposition is
$$
P(u)\big\vert_{S}-v\ell_{12}\sim_{\mathbb{R}}\underbrace{(4-2u-v)(2\ell-e_3-e_4)}_{\text{positive part}}+
\underbrace{(v-2+u)\big(e_1+e_2+\ell_{34}\big)}_{\text{negative part}},
$$
and its volume is $2(4-2u-v)^2$. If $v>4-2u$, this divisor is not pseudo-effective, so that
\[
\aligned
\frac{1}{10}\varint_{1}^{2}&\varint_{0}^{\infty}\mathrm{vol}\Big(P(u)\big\vert_{S}-v\ell_{12}\Big)dvdu\\
&=\frac{1}{10}\varint_{1}^{2}\varint_{0}^{4-2u}\mathrm{vol}\Big(P(u)\big\vert_{S}-v\ell_{12}\Big)dvdu\\
&=\frac{1}{10}\left\{\varint_{1}^{2}\varint_{0}^{2-u}\big(5u^2+2uv-v^2-20u-4v+20\big)dvdu+\varint_{1}^{2}\varint_{2-u}^{4-2u}2(4-2u-v)^2dvdu\right\}\\
&=\frac{13}{120}.
\endaligned
\]
Therefore, we see that
\[\aligned
 S\big(W_{\bullet,\bullet}^S;\ell_{12}\big)&=\frac{1}{10}\varint_0^2\varint_0^\infty \mathrm{vol}\big(P(u)\big\vert_{S}-v\ell_{12}\big)dvdu\\
&=\frac{1}{10}\varint_{0}^{1}\varint_{0}^{\infty}\mathrm{vol}\Big(P(u)\big\vert_{S}-v\ell_{12}\Big)dvdu+\frac{1}{10}\varint_{1}^{2}\varint_{0}^{\infty}\mathrm{vol}\Big(P(u)\big\vert_{S}-v\ell_{12}\Big)dvdu\\
&=\frac{107}{120}+\frac{13}{120}=1,
\endaligned
\]
which implies, in particular, that $Z$ is not a curve.

Hence, we see that $\pi(Z)$ is one of the points $O_2$, $O_3$, $O_4$.
Without loss of generality, we may assume that either $\pi(Z)=O_2$ or $\pi(Z)=O_4$, so that $Z\in\ell_{12}$ in both subcases.
Now, using \cite[Theorem~1.7.30]{ACCFKMGSSV}, we see that
$$
1\geqslant\frac{A_X(F)}{S_X(F)}\geqslant \min\left\{\frac{1}{S(W_{\bullet, \bullet,\bullet}^{S,\ell_{12}};Z)},
\frac{1}{S(W_{\bullet,\bullet}^S;\ell_{12})},\frac{1}{S_X(S)}\right\}=
\min\left\{\frac{1}{S(W_{\bullet, \bullet,\bullet}^{S,\ell_{12}};Z)},1\right\},
$$
where $S(W_{\bullet, \bullet,\bullet}^{S,\ell_{12}};Z)$ is defined in \cite[\S~1.7]{ACCFKMGSSV}.
In fact, \cite[Theorem~1.7.30]{ACCFKMGSSV} implies the~strict inequality $S(W_{\bullet, \bullet,\bullet}^{S,\ell_{12}};Z)<1$, because $S_X(S)<1$.
Let us compute $S(W_{\bullet, \bullet,\bullet}^{S,\ell_{12}};Z)$.

For $0\leqslant u\leqslant 2$ and $v\geqslant 0$, let $P(u,v)$ be the~positive part of the~Zariski decomposition of the~divisor $P(u)\vert_{S}-v\ell_{12}$, and let $N(u,v)$ be its~negative part.

If $0\leqslant u\leqslant 1$, then
$$
P(u,v)=\left\{\aligned
&(4-u-v)\ell-(1-v)\big(e_1+e_2\big)-e_3-e_4\ & \text{if $0\leqslant v\leqslant 1$}, \\
&(4-u-v)\ell-e_3-e_4\ & \text{if $1\leqslant v\leqslant 2-u$}, \\
&(3-u-v)(2\ell-e_3-e_4)\  & \text{if $2-u\leqslant v\leqslant 3-u$},
\endaligned
\right.
$$
and
$$
N(u,v)=\left\{\aligned
&0\ & \text{if $0\leqslant v\leqslant 1$}, \\
&(v-1)\big(e_1+e_2\big)\ & \text{if $1\leqslant v\leqslant 2-u$}, \\
&(v-1)\big(e_1+e_2\big)+(v-2+u)\ell_{34}\  & \text{if $2-u\leqslant v\leqslant 3-u$}.
\endaligned
\right.
$$
Similarly, if $1\leqslant u\leqslant 2$, then
$$
P(u,v)=\left\{\aligned
&(6-3u-v)\ell+(v+u-2)(e_1+e_2)+(u-2)(e_3+e_4)  \ \  \text{ if $0\leqslant v\leqslant 2-u$}, \\
&(4-2u-v)\big(2\ell-e_3-e_4\big)   \text{ \hspace{43.5mm} if $2-u\leqslant v\leqslant 4-2u$},
\endaligned
\right.
$$
and
$$
N(u,v)=\left\{\aligned
&0\ & \text{if $0\leqslant v\leqslant 2-u$}, \\
&(v-2+u)\big(e_1+e_2+\ell_{34}\big)\ & \text{if $2-u\leqslant v\leqslant 4-2u$}.
\endaligned
\right.
$$
Recall~from \cite[Theorem~1.7.30]{ACCFKMGSSV} that
$$
S\big(W_{\bullet,\bullet}^{S,\ell_{12}};Z\big)=F_{Z}\big(W_{\bullet,\bullet}^{S,\ell_{12}}\big)+\frac{3}{(-K_X)^3}\varint_{0}^{2}\varint_{0}^{\infty}\big(P(u,v)\cdot\ell_{12}\big)^2dvdu
$$
for
$$
F_Z\big(W_{\bullet,\bullet}^{S,\ell_{12}}\big)=\frac{6}{(-K_X)^3}\varint_{0}^{2}\varint_{0}^{\infty}\big(P(u,v)\cdot\ell_{12}\big)
\mathrm{ord}_Z\Big(N^\prime_{S}(u)\big\vert_{\ell_{12}}+N(u,v)\big\vert_{\ell_{12}}\Big)dvdu,
$$
where $N^\prime_S(u)$ is the~part of the~divisor $N(u)\vert_{S}$ whose support does not contain~$\ell_{12}$,
so that $N^\prime_S(u)=N(u)\vert_{S}$ in our case,
which implies that $\mathrm{ord}_{Z}(N^\prime_{S}(u)\vert_{\ell_{12}})=0$ for $0\leqslant u\leqslant 2$, because $Z\not\in\widetilde{Q}$.
Thus, if $\pi(Z)=O_2$, then $Z\not\in \ell_{34}\cup e_1\cup e_2$, which gives $F_Z(W_{\bullet,\bullet}^{S,\ell_{12}})=0$.
On the other hand, if $\pi(Z)=O_4$, then $Z=\ell_{12}\cap\ell_{34}$ and $Z\not\in e_1\cup e_2$, so that
\[
\aligned
F_Z\big(W_{\bullet,\bullet}^{S,\ell_{12}}\big) &=
\frac{1}{5}\varint_{0}^{2}\varint_{0}^{\infty}\big(P(u,v)\cdot\ell_{12}\big)\mathrm{ord}_Z\Big(N(u,v)\big\vert_{\ell_{12}}\Big)dvdu\\
&=\frac{1}{5}\left\{\varint_{0}^{1}\varint_{2-u}^{3-u}(6-2u-2v+6)(v-2+u)dvdu+\right.\\
& \pushright{\hspace{50mm}\left.+\varint_{1}^{2}\varint_{2-u}^{4-2u}(8-4u-2v+8)(v-2+u)dvdu\right\}}\\
&=\frac{1}{12}.
\endaligned
\]
Therefore, we see that
\[\aligned
  S\big(&W_{\bullet,\bullet}^{S,\ell_{12}};Z\big)\leqslant \frac{1}{12}+\frac{1}{10}\varint_{0}^{2}\varint_{0}^{\infty}\big(P(u,v)\cdot\ell_{12}\big)^2dvdu\\
&=\frac{1}{12}+\frac{1}{10}\left\{\varint_{0}^{1}\varint_{0}^{1}\big(2-u+v\big)^2dvdu+\varint_{0}^{1}\varint_{1}^{2-u}\big(4-u-v\big)^2dvdu+\right.\\
&\left.+\varint_{0}^{1}\varint_{2-u}^{3-u}\big(6-2u-2v\big)^2dvdu+\varint_{1}^{2}\varint_{0}^{2-u}\big(2-u+v\big)^2dvdu+\varint_{1}^{2}\varint_{2-u}^{4-2u}\big(8-4u-2v\big)^2dvdu\right\}\\
&=1.
\endaligned
\]
However, as we already mentioned, one has $S(W_{\bullet,\bullet}^{S,\ell_{12}};Z)<1$ by  \cite[Theorem~1.7.30]{ACCFKMGSSV}.
The~obtained contradiction concludes that  $Z\subset H_4$.

Since  $Z\not\subset H_1\cup H_2\cup H_3$, the center~$Z$ must be a $G$-invariant curve on $H_4$. 
Moreover,~$\pi(Z)$ cannot be one of the lines determined by the points $O_1$, $O_2$, $O_3$ on $\Pi_4$. This implies that~$\pi(Z)$ is a curve of degree $d\geqslant 2$ on $\Pi_4$.

 We keep the same notations as in the beginning of the proof, i.e., put $S=H_4$ and
let $\varphi\colon S\to \Pi_1$ be birational morphism induced by $\pi$.
As before,  $\varphi$ is a~$G$-equivariant blow up of the~four intersection points $\Pi_4\cap C_4$ which consist of a $G$- orbit of length $4$. We also denote by $\ell$  the~proper transform on $S$ of a~general line in $\Pi_4$ and by  $e_1$, $e_2$, $e_3$, $e_4$  the four $\varphi$-exceptional curves. In addition, denote by $\mathscr{C}$ the proper transform of a general conic passing through the four points  $\Pi_4\cap C_4$.

Since the~group $\mathrm{Pic}^G(S)$ is generated by the~divisor classes $\ell$, $e_1+e_2+e_3+e_4$, we have
$$
Z\sim d\ell-m(e_1+e_2+e_3+e_4).
$$
where $m$ is a non-negative integer. By taking intersection with the proper transforms of the lines on $\Pi_4$ passing through $\varphi(e_i)$, $\varphi(e_j)$,
we obtain $d\geqslant 2m$. Since $d\geqslant 2$, this implies that $|Z-\mathscr{C}|\ne\varnothing$. Note that $\mathscr{C}\not\subset \widetilde{Q}$.
By the same argument as before, we obtain
\[\begin{split}
1<S\big(W^S_{\bullet,\bullet};Z\big)&=\frac{1}{10}\varint_0^2\varint_0^\infty \mathrm{vol}\big(P(u)\big\vert_{S}-vZ\big)dvdu\\
&\leqslant \frac{1}{10}\varint_0^2\varint_0^\infty \mathrm{vol}\big(P(u)\big\vert_{S}-v\mathscr{C}\big)dvdu=S\big(W^S_{\bullet,\bullet};\mathscr{C}\big),\\
\end{split}\]
where $P(u)$ is the positive part of $-K_X-uS$ as before.
Let us compute $S(W^S_{\bullet,\bullet};\mathscr{C})$.

Similar to the notations used earlier in the proof, we denote by $P(u,v)$ the~positive part of the~Zariski decomposition of the~divisor $P(u)\vert_{S}-v\mathscr{C}$ for $0\leqslant u\leqslant 2$ and $v\geqslant 0$, and we denote by $N(u,v)$ its~negative part.
If $0\leqslant u\leqslant 1$, then
$$
P(u,v)=\left\{\aligned
&(4-u-2v)\ell-(1-v)\big(e_1+e_2+e_3+e_4\big)\ & \text{if $0\leqslant v\leqslant 1$}, \\
&(4-u-2v)\ell\  & \text{if $1\leqslant v\leqslant \frac{4-u}{2}$},
\endaligned
\right.
$$
and
$$
N(u,v)=\left\{\aligned
&0\ & \text{if $0\leqslant v\leqslant 1$}, \\
&(v-1)\big(e_1+e_2+e_3+e_4\big)\  & \text{if $1\leqslant v\leqslant \frac{4-u}{2}$}.
\endaligned
\right.
$$
Similarly, if $1\leqslant u\leqslant 2$, then
$$
P(u,v)=\left\{\aligned
&(6-3u-2v)\ell+(v+u-2)(e_1+e_2+e_3+e_4)  \ \  \text{ if $0\leqslant v\leqslant 2-u$}, \\
&(6-3u-2v)\ell   \text{ \hspace{43.5mm} if $2-u\leqslant v\leqslant \frac{6-3u}{2}$},
\endaligned
\right.
$$
and
$$
N(u,v)=\left\{\aligned
&0\ & \text{if $0\leqslant v\leqslant 2-u$}, \\
&(v+u-2)\big(e_1+e_2+e_3+e_4\big)\ & \text{if $2-u\leqslant v\leqslant \frac{6-3u}{2}$}.
\endaligned
\right.
$$
This gives
\[
\aligned
1<S\big(W^S_{\bullet,\bullet};\mathscr{C}\big)&=\frac{1}{10}\left\{\varint_{0}^{1}\varint_{0}^{1}\big(P(u)\big\vert_{S}-v\mathscr{C}\big)^2dvdu+\varint_{0}^{1}\varint_{1}^{\frac{4-u}{2}}\big((4-u-2v)\ell\big)^2dvdu+\right.\\
&\pushright{\left.+\varint_{1}^{2}\varint_{0}^{2-u}\big(P(u)\big\vert_{S}-v\mathscr{C}\big)^2dvdu+\varint_{1}^{2}\varint_{2-u}^{\frac{6-3u}{2}}\big((6-3u-2v)\ell\big)^2dvdu\right\}}\\
&=\frac{1}{10}\left\{\varint_{0}^{1}\varint_{0}^{1} (4-u-2v)^2-4(1-v)dvdu+\varint_{0}^{1}\varint_{1}^{\frac{4-u}{2}}\big(4-u-2v\big)^2dvdu+\right.\\
&\pushright{\phantom{s}\left.+\varint_{1}^{2}\varint_{0}^{2-u}(6-3u-2v)^2-4(2-u-v)dvdu+\varint_{1}^{2}\varint_{2-u}^{\frac{6-3u}{2}}(6-3u-2v)^2dvdu\right\}}\\
&=\frac{23}{40},
\endaligned
\]
which is a contradiction. This completes the proof of Proposition.
\end{proof}

\begin{corollary}
Both $Z$ and $\pi(Z)$ are~irreducible curves, and $\pi(Z)$ is not entirely contained in $\Pi_1\cup\Pi_2\cup\Pi_3\cup\Pi_4\cup Q$.
\end{corollary}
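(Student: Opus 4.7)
The plan is to assemble the Corollary as a short consequence of the Lemma and the Proposition, with essentially no new computation.

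First, I will rule out the possibility that $Z$ is a $G$-fixed point. Suppose it is. By the Lemma, $Z \notin \widetilde{Q}\cup E$, so $\pi$ is an isomorphism near $Z$ and $\pi(Z)$ must be a $G$-fixed point of $\mathbb{P}^3$ lying off $Q$. As observed earlier, the only $G$-fixed points of $\mathbb{P}^3$ are $O_1,O_2,O_3,O_4$, and each $O_i$ lies on the three planes $\Pi_j$ with $j\neq i$ by the definition of the $\Pi_j$. Consequently $Z$ would be contained in $H_j$ for some $j$, contradicting the Proposition. Therefore $Z$ must be an irreducible $G$-invariant curve.

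Next, I will argue that $\pi(Z)$ is an irreducible curve. Irreducibility is automatic since $\pi$ sends an irreducible set to an irreducible set, so the only way $\pi(Z)$ could fail to be a curve is for it to be a point. In that case $Z$ sits in a fiber of $\pi$; but every fiber of $\pi$ is either a single point (which contradicts $Z$ being a curve) or a line inside $E$ (which contradicts the Lemma). Hence $\pi(Z)$ is an irreducible curve.

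Finally, suppose for contradiction that $\pi(Z) \subset \Pi_1\cup\Pi_2\cup\Pi_3\cup\Pi_4\cup Q$. By irreducibility, $\pi(Z)$ is entirely contained in one of these five surfaces. If $\pi(Z)\subset\Pi_i$ for some $i$, then because $Z\not\subset E$ the curve $Z$ is the proper transform of $\pi(Z)$ under $\pi$, and this proper transform lies in $H_i$; this contradicts the Proposition. If instead $\pi(Z)\subset Q$, the same reasoning puts $Z$ inside $\widetilde{Q}$, contradicting the Lemma. The only step requiring any care is the first, where one must invoke the incidence observation that every $G$-fixed point of $\mathbb{P}^3$ lies on three of the four $G$-invariant planes; the rest is formal.
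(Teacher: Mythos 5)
Your proof is correct and follows exactly the intended route: the paper leaves this Corollary as an immediate consequence of the Lemma and the Proposition, and your deduction (a $G$-fixed point of $X$ would map to some $O_i$, which lies on three of the planes $\Pi_j$, hence would put $Z$ in some $H_j$; a curve $Z$ with $\pi(Z)$ a point would lie in $E$; and $\pi(Z)\subset\Pi_i$ or $\pi(Z)\subset Q$ would force $Z\subset H_i$ or $Z\subset\widetilde{Q}$) is precisely the omitted argument. No issues.
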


Using  \cite[Lemma~1.4.4]{ACCFKMGSSV}, we see that $\alpha_{G,Z}(X)<\frac{3}{4}$.
Now, using \cite[Lemma~1.4.1]{ACCFKMGSSV}, we see that there are a~$G$-invariant effective $\mathbb{Q}$-divisor $D$ on the~threefold $X$
and a~positive rational number $\mu<\frac{3}{4}$ such that $D\sim_{\mathbb{Q}}-K_X$ and $Z$ is contained in the~locus $\mathrm{Nklt}(X,\mu D)$.
Moreover, it follows from Claim that $\mathrm{Nklt}(X,\mu D)$ does not contain $G$-irreducible surfaces
except maybe for $\widetilde{Q}$, $H_1$, $H_2$, $H_3$, $H_4$.
Now, applying  \cite[Corollary A.1.13]{ACCFKMGSSV} to $(\mathbb{P}^3,\mu\pi(D))$, we see that $\pi(Z)$ must be a $G$-invariant line in $\mathbb{P}^3$.
But this is impossible by Corollary, since all $G$-invariant lines in $\mathbb{P}^3$ are contained in $\Pi_1\cup\Pi_2\cup\Pi_3\cup\Pi_4$.

The obtained contradiction completes the proof of our Theorem.

\medskip

\textbf{Acknowledgements.}
Cheltsov has been supported by EPSRC Grant EP/V054597/1. Park has been supported by IBS-R003-D1, Institute for Basic Science in Korea.

\end{document}